\documentclass[12pt,twoside]{amsart}
\usepackage{amsmath, amsthm, amscd, amsfonts, amssymb, graphicx, mathabx}

\usepackage{enumerate}
\usepackage[colorlinks=true,
linkcolor=blue,
urlcolor=cyan,
citecolor=red]{hyperref}
\usepackage{mathrsfs}
\addtolength{\topmargin}{-1.5cm}
\linespread {1.3}
\textwidth 17cm
\textheight 23cm
\addtolength{\hoffset}{-0.3cm}
\oddsidemargin 0cm
\evensidemargin 0cm
\setcounter{page}{1}
\newtheorem{theorem}{Theorem}[section]
\newtheorem{lemma}{Lemma}[section]
\newtheorem{remark}{Remark}[section]

\newtheorem{corollary}{Corollary}[section]
\newtheorem{example}{Example}[section]
\newtheorem{proposition}{Proposition}[section]
\numberwithin{equation}{section}

\begin{document}
\title{Subadditive inequalities for operators}
\author{Hamid Reza Moradi,  Zahra Heydarbeygi and Mohammad Sababheh}
\subjclass[2010]{Primary 47A63; Secondary 47A64,  47B65,  15A60.}
\keywords{Operator inequality, operator convex, operator concave, positive operator.} \maketitle
\begin{abstract}
In this article, we present a new subadditivity  behavior of convex and concave functions, when applied to Hilbert space operators. For example, under suitable assumptions on the spectrum of the positive operators $A$ and $B$, we prove that
\[{{2}^{1-r}}{{\left( A+B \right)}^{r}}\le {{A}^{r}}+{{B}^{r}}\quad\text{ for }r>1\text{ and }r<0,\]
and
\[{{A}^{r}}+{{B}^{r}}\le {{2}^{1-r}}{{\left( A+B \right)}^{r}}\quad\text{ for }r\in \left[ 0,1 \right].\]
These results provide considerable generalization of earlier results by Aujla and Silva. 

Further, we present several extensions of the subadditivity idea initiated by Ando and Zhan then extended by Bourin and  Uchiyama.
\end{abstract}
\pagestyle{myheadings}
\markboth{\centerline {H.R. Moradi,  Z. Heydarbeygi \& M. Sababheh }}
{\centerline {Subadditive inequalities for operators}}
\bigskip
\bigskip

\section{Introduction}\label{sec1}
Let $\mathcal{B}\left( \mathcal{H} \right)$ be the algebra of all (bounded linear) operators on a  complex Hilbert space $\mathcal{H}$. An operator $A$ on $\mathcal{H}$ is said to be {\it positive} (in symbol: $A\ge 0$) if $\left\langle Ax ,x  \right\rangle \ge 0$ for all $x \in \mathcal{H}$. We write $A>0$ if $A$ is positive and invertible; and we say that $A$ is strictly positive in this case. For self-adjoint operators $A$ and $B$, we write $A\ge B$ if $A-B$ is positive, i.e., $\left\langle Ax ,x  \right\rangle \ge \left\langle Bx ,x  \right\rangle $ for all $x \in \mathcal{H}$. We call it the usual order. In particular, for some scalars $m$ and $M$, we write $m\le A\le M$ if $m\left\langle x ,x  \right\rangle \le \left\langle Ax ,x  \right\rangle \le M\left\langle x ,x  \right\rangle $ for all $x \in \mathcal{H}$.  In what follows we denote the weighted arithmetic mean and the weighted geometric mean of strictly positive operators $A$ and $B$ by $A{{\nabla }_{v}}B\equiv \left( 1-v \right)A+vB$ and $A{{\sharp}_{v}}B\equiv {{A}^{\frac{1}{2}}}{{\left( {{A}^{-\frac{1}{2}}}B{{A}^{-\frac{1}{2}}} \right)}^{v}}{{A}^{\frac{1}{2}}}$, respectively. For the case $v={1}/{2}\;$ we write $A\nabla B$ and $A\sharp B$. Notice that the definition of $A{{\nabla }_{v}}B$ is still valid for positive (not necessarily strictly positive) operators.

 A real-valued continuous function $f$ on an interval $J$ is said to be {\it operator convex} (resp. {\it operator concave}) if $f\left( A{{\nabla }_{v}}B \right)\le \left( \text{resp}\text{. }\ge  \right)f\left( A \right){{\nabla }_{v}}f\left( B \right)$ for all $v\in \left[ 0,1 \right]$  and for all self-adjoint operators $A,B\in\mathcal{B}(\mathcal{H})$ whose spectra are contained in $J$. A continuous function $f$ on $J$ is called {\it operator monotone increasing} (resp. {\it decreasing}), if
\[A\le B\quad\text{ }\Rightarrow \quad\text{ }f\left( A \right)\le \left( \text{resp}\text{. }\ge  \right)f\left( B \right).\]

An important observation that relates operator convex and operator monotone functions is given by the following result \cite[Theorem 2.1, Theorem 3.1, Theorem 2.3 and Theorem 3.7]{ando1}. 
\begin{proposition}\label{oper_intro_prop}
Let $f:(0,\infty)\to [0,\infty)$ be continuous. Then 
\begin{enumerate}
\item $f$ is operator monotone decreasing if and only if $f$ is operator convex and $f(\infty)<\infty$.
\item $f$ is operator monotone increasing if and only if $f$ is operator concave.
\end{enumerate}
\end{proposition}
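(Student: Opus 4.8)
The plan is to reduce the two equivalences to L\"owner's representation theory, handling the ``easy'' forward implications by a direct superposition argument over explicit model functions, and the ``hard'' converse implications by analytic continuation into the Pick class, where the side hypotheses $f\ge 0$ and $f(\infty)<\infty$ enter decisively.

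First I would fix, for each $\lambda>0$, the model functions $k_\lambda(t)=\frac{\lambda t}{\lambda+t}$ and $g_\lambda(t)=\frac{\lambda}{\lambda+t}=1-\lambda^{-1}k_\lambda(t)$ on $(0,\infty)$. From the classical facts that $t\mapsto t^{-1}$ is operator monotone decreasing and operator convex, a short computation shows that $k_\lambda$ is simultaneously operator monotone increasing, operator concave, non-negative and bounded, while $g_\lambda$ is operator monotone decreasing, operator convex, non-negative and bounded; the constants and $t\mapsto t$ are operator monotone increasing and both operator convex and operator concave. Since non-negative combinations, integrals against positive measures, and pointwise limits preserve each of these properties, the forward implications fall out of L\"owner's representations. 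An operator monotone increasing $f$ equals $\alpha+\beta t+\int_0^\infty k_\lambda\,d\mu$ with $\beta\ge0$ and $\mu\ge0$, a superposition of operator concave blocks, which is the easy half of (2). Dually, a non-negative operator monotone decreasing $f$ is scalar-wise decreasing and bounded below by $0$, so $f(\infty)<\infty$ is automatic, and its representation $c+\int_0^\infty g_\lambda\,d\mu$ with $c=f(\infty)\ge0$ exhibits it as a superposition of operator convex blocks, which is the easy half of (1).

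The real content is in the converses, which are precisely the deep halves of L\"owner's theorem and its convex counterpart. For (2) I would show that a non-negative operator concave $f$ is the restriction to $(0,\infty)$ of a Pick function, an analytic self-map of the upper half-plane, since every such function is operator monotone: operator concavity furnishes the analytic continuation, while non-negativity on all of $(0,\infty)$ is what supplies the correct sign of the imaginary part and rules out the eventually-negative, hence non-monotone, contributions that operator concavity alone would permit. For (1) I would reduce to (2) through the standard correspondence between operator convex and operator monotone functions, implemented by the transforms $f(t)\mapsto f(t)/t$ and $f(t)\mapsto t\,f(t^{-1})$; under this correspondence the finiteness of $f(\infty)$ is exactly the condition that lands the transformed function in the operator monotone class and suppresses its unbounded, non-decreasing part.

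I expect the genuine obstacle to be this passage through the analytic representation: the model-function computations and the superposition bookkeeping are routine, but proving that an operator concave (resp.\ operator convex) function satisfying the side condition extends to the appropriate Pick-type function is the hard, L\"owner-theoretic input. It is worth stressing why no elementary argument can replace it and why the hypotheses cannot be dropped. A non-negative scalar concave function, such as a downward parabola, need not be monotone, so scalar reasoning is useless; and the Hansen--Pedersen compression inequality only gives $f(A)\ge C^{*}f(B)C$ for the contraction $C=B^{-1/2}A^{1/2}$ with $C^{*}BC=A$, which is the wrong direction for monotonicity. Operator concavity must therefore be exploited in full strength, through the representation, and it is exactly the sign and boundedness hypotheses that select the operator monotone cone.
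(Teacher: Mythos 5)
Your proposal should first be measured against the right target: the paper does not prove Proposition \ref{oper_intro_prop} at all --- it quotes it from Ando--Hiai \cite{ando1} and Uchiyama \cite{Uchiyama}. So the relevant comparison is with the standard proofs in those sources. Your two ``forward'' implications (operator monotone $\Rightarrow$ operator concave, resp.\ operator convex with $f(\infty)<\infty$) are acceptable in outline: they follow by superposition once L\"owner's integral representation is quoted, modulo a small slip --- a nonnegative operator monotone decreasing function need not have the form $c+\int_0^\infty g_\lambda\,d\mu$ with $g_\lambda(t)=\lambda/(\lambda+t)$ (the function $f(t)=1/t$ already fails this); one needs a representation such as $c+\int_{[0,\infty)}(\lambda+t)^{-1}\,d\nu(\lambda)$ allowing an atom at $\lambda=0$. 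The genuine gap is in the converses. For ``operator concave and nonnegative $\Rightarrow$ operator monotone'' you give no argument: you rephrase the claim as ``$f$ extends to a Pick function,'' which, by L\"owner's theorem, \emph{is} the statement to be proved, and then declare it the hard input you expect to be the obstacle. Nothing in the sketch explains how nonnegativity on $(0,\infty)$ actually yields $\operatorname{Im}f\ge 0$ on the upper half-plane (this would require passing through the Bendat--Sherman/Kraus representation of operator concave functions and showing the linear coefficient is forced to be nonnegative). The same applies to your reduction of (1) to (2), which additionally rests on the unproved assertion that $t\mapsto t f(1/t)$ preserves operator convexity.

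Moreover, your closing claim --- that ``no elementary argument can replace'' this step --- is false, and this is precisely where the plan is inverted. The converse halves are the \emph{elementary} halves. If $f$ is operator concave and $f\ge 0$, take $0<A\le B$, fix $\lambda\in(0,1)$, and write $\lambda B=\lambda A+(1-\lambda)C_\lambda$ with $C_\lambda=\tfrac{\lambda}{1-\lambda}(B-A)\ge 0$; after replacing $C_\lambda$ by $C_\lambda+\varepsilon$ and letting $\varepsilon\downarrow 0$ to keep spectra in $(0,\infty)$, operator concavity and $f\ge 0$ give
\[
f(\lambda B)\ \ge\ \lambda f(A)+(1-\lambda)f(C_\lambda)\ \ge\ \lambda f(A),
\]
and letting $\lambda\uparrow 1$ yields $f(B)\ge f(A)$. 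Dually, if $f$ is operator convex with $f(\infty)<\infty$ (so the scalar convex function $f$ is non-increasing, hence $0\le f\le f(a)$ on $[a,\infty)$), write $B=\lambda A+(1-\lambda)Y_\lambda$ with $Y_\lambda=A+\tfrac{1}{1-\lambda}(B-A)\ge a:=\lambda_{\min}(A)$, so that
\[
f(B)\ \le\ \lambda f(A)+(1-\lambda)f(Y_\lambda)\ \le\ \lambda f(A)+(1-\lambda)f(a),
\]
and again let $\lambda\uparrow 1$. These two-line convexity arguments use only the side hypotheses $f\ge0$ and $f(\infty)<\infty$ plus continuity; your objection based on the Hansen--Pedersen compression inequality is aimed at the wrong elementary route. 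In short: the halves you call easy are the ones that genuinely need the deep (but citable) representation theorem, while the halves you call hard and leave unproved can be settled in a few lines --- replacing your Pick-function detour by the argument above is what closes the gap.
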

For a more recent reference for this proposition, we refer the reader to \cite[Theorem 2.4]{Uchiyama}. We remark that in this reference, it is shown that a function $f$ defined on $(a,\infty)$ is operator monotone if and only if $f$ is operator concave and $f(\infty)>-\infty.$ We notice that the condition $f(\infty)>-\infty$ is implicitly assumed in Proposition \ref{oper_intro_prop} since $f$ is non-negative.

Proposition \ref{oper_intro_prop} will be used frequently in the sequel; where treatment of operator monotone functions and that of operator convexity are interchangeable.\\
It is well-known that a concave function $f$ with $f(0)\geq 0$ is subadditive in the sense that 
\begin{equation}\label{conc_sub_intro}
f(a+b)\leq f(a)+f(b)
\end{equation} 
 for the non-negative numbers $a,b.$ A similar inequality is not necessarily valid for operator concave functions. That is, an operator concave function $f$ does not necessarily satisfy 
$$f(A+B)\leq f(A)+f(B)$$ for the positive operators $A,B.$ In 1999 Ando and Zhan \cite{ando} proved a subadditivity inequality for operator concave functions, which says that if $A$, $B$ are two positive  matrices, then
\begin{equation}\label{17}
{{\left\| f\left( A+B \right) \right\|}_{u}}\le {{\left\| f\left( A \right)+f\left( B \right) \right\|}_{u}}
\end{equation}
for any unitarily invariant norm ${{\left\| \cdot \right\|}_{u}}$ and any non-negative operator monotone function $f$ on $\left[ 0,\infty  \right)$. Bourin and Uchiyama \cite{bourin} showed that the condition operator concavity in \eqref{17} can be replaced by scalar concavity.

 We  refer the reader to \cite{ando,bourin} for different variants of \eqref{17}. Our main target in this paper is to discuss  subadditivity inequalities for convex and operator convex functions, without appealing to unitarily invariant norms. However, this will be at the cost of additional conditions or weaker estimates. The first result in this direction will be an operator convexity behavior for convex functions. More precisely, we prove that a convex function satisfies the operator convexity inequality
$$f\left(\frac{A+B}{2}\right)\leq \frac{f(A)+f(B)}{2},$$ under some conditions on the spectra of the positive operators $A,B.$

Moreover, we show that concave functions (not necessarily operator concave) satisfy the operator subadditivity inequality
$$k\;f(A+B)\leq f(A)+f(B),$$ for a positive scalar $k\leq 1.$ We consider this an interesting extension of \eqref{17}.

Another result of this type is due to Aujla \cite{2} which asserts that if the function $f$ is completely monotone (in the sense, ${{\left( -1 \right)}^{k}}{{f}^{\left( k \right)}}\left( x \right)\ge 0$ for all $k=0,1,\ldots $ and all $x\in \left[ 0,\infty  \right)$, where ${{f}^{\left( 0 \right)}}=0$ and ${{f}^{\left( k \right)}}$ denotes the $k$-th derivative of $f$) on $\left[ 0,\infty  \right)$, then
\begin{equation}\label{18}
2{{\left\| f\left( A+B \right) \right\|}_{u}}\le {{\left\| f\left( 2A \right)+f\left( 2B \right) \right\|}_{u}}.
\end{equation}

In Sec. \ref{sec2}, we extend the norm order in \eqref{18} to the operator order (see Corollary \ref{7}). Naturally, this generalization imposes additional conditions. Inspired by the result given in Theorem \ref{6}, we present some extensions of the inequalities due to Aujla and Silva \cite{1}. Our inequalities refine earlier results in this direction obtained in \cite{2} and \cite{kian}.
\section{Main Results}\label{sec2}
Our first main result proposes a mild condition under which operator convexity follows from scalar convexity.
\begin{theorem}\label{6}
Let $A,B\in \mathcal{B}\left( \mathcal{H} \right)$ be two positive operators such that $n\le A\le N$ and $m\le B\le M$ 
for some positive real numbers $n<N$ and $m<M$. Further, let $f:(0,\infty)\to\mathbb [0,\infty)$ be a convex function. If, for some $v\in (0,1)$,
$$\left[ n{{\nabla }_{v}}m,N{{\nabla }_{v}}M \right]\cap \left[ n,N \right],\left[ n{{\nabla }_{v}}m,N{{\nabla }_{v}}M \right]\cap \left[ m,M \right]=\varnothing,$$ 
then 
\begin{equation}\label{3}
f\left( A{{\nabla }_{v}}B \right)\le f\left( A \right){{\nabla }_{v}}f\left( B \right).
\end{equation}
In particular,
\begin{equation}\label{4}
f\left( \frac{A+B}{2} \right)\le \frac{f\left( A \right)+f\left( B \right)}{2}
\end{equation}
provided that the above empty intersection condition is fulfilled when $v=\frac{1}{2}.$\\
The reverse inequalities hold when $f$ is concave.
\end{theorem}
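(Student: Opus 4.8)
The plan is to reduce the operator inequality \eqref{3} to the existence of a single affine function $\ell(t)=pt+q$ that separates the relevant spectra. Write $\alpha:=n\nabla_v m=(1-v)n+vm$ and $\beta:=N\nabla_v M=(1-v)N+vM$; since $n<N$, $m<M$ and $v\in(0,1)$, we have $\alpha<\beta$. From $nI\le A\le NI$ and $mI\le B\le MI$ one gets $\alpha I\le A\nabla_v B\le\beta I$, so that $\mathrm{sp}(A\nabla_v B)\subseteq[\alpha,\beta]$, whereas $\mathrm{sp}(A)\subseteq[n,N]$ and $\mathrm{sp}(B)\subseteq[m,M]$. The goal is to find one line $\ell$ with $f\le\ell$ on $[\alpha,\beta]$ but $f\ge\ell$ on $[n,N]$ and on $[m,M]$.

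First I would take $\ell$ to be the secant line of $f$ through the endpoints of $[\alpha,\beta]$, namely
$$\ell(t)=f(\alpha)+\frac{f(\beta)-f(\alpha)}{\beta-\alpha}\,(t-\alpha),\qquad p:=\frac{f(\beta)-f(\alpha)}{\beta-\alpha}.$$
For a convex $f$ this line has two complementary features: the chord lies above the graph on $[\alpha,\beta]$, so $f(t)\le\ell(t)$ there, while the extended secant lies below the graph off the interval, so $f(t)\ge\ell(t)$ for $t\notin(\alpha,\beta)$. Both follow at once from observing that $\ell-f$ is concave and vanishes at $\alpha$ and $\beta$, hence is nonnegative between them and nonpositive outside. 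This is exactly where the hypothesis is used: the emptiness of $[\alpha,\beta]\cap[n,N]$ and of $[\alpha,\beta]\cap[m,M]$ forces each of $[n,N]$ and $[m,M]$, being an interval disjoint from $[\alpha,\beta]$, to lie entirely in $(-\infty,\alpha]\cup[\beta,\infty)$, precisely the region on which $f\ge\ell$.

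Next I would pass to operators through the order-preserving continuous functional calculus, by which a scalar inequality valid throughout the spectrum of a self-adjoint operator lifts to the corresponding operator inequality. Since $\mathrm{sp}(A\nabla_v B)\subseteq[\alpha,\beta]$, the first feature yields $f(A\nabla_v B)\le\ell(A\nabla_v B)=p(A\nabla_v B)+qI$. Since $\mathrm{sp}(A)$ and $\mathrm{sp}(B)$ avoid $(\alpha,\beta)$, the second yields $f(A)\ge pA+qI$ and $f(B)\ge pB+qI$. Forming the $v$-weighted arithmetic mean of the last two and using that $\ell$ and $\nabla_v$ are affine,
$$f(A)\nabla_v f(B)\ \ge\ p\bigl((1-v)A+vB\bigr)+qI\ =\ \ell(A\nabla_v B)\ \ge\ f(A\nabla_v B),$$
which is \eqref{3}; specializing to $v=\tfrac12$ gives \eqref{4}.

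The main obstacle is conceptual rather than computational: recognizing that one affine function can simultaneously over-estimate $f$ on the middle block $[\alpha,\beta]$ and under-estimate it on the outer blocks, and that the secant through the endpoints of $[\alpha,\beta]$ is such a function precisely because the separation hypothesis keeps $[n,N]$ and $[m,M]$ out of the open interval $(\alpha,\beta)$. Once this geometric picture is fixed, the operator step is routine. For a concave $f$ every inequality reverses—the chord now lies below the graph on $[\alpha,\beta]$ and the extended secant above it outside—so the same line $\ell$ gives $f(A\nabla_v B)\ge f(A)\nabla_v f(B)$, establishing the stated reverse inequalities.
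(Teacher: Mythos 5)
Your proposal is correct and follows essentially the same route as the paper's own proof: both take the secant line of $f$ over $[n\nabla_v m,\, N\nabla_v M]$, use convexity to get $f\le$ secant on that interval and $f\ge$ secant on $[n,N]\cup[m,M]$ (via the disjointness hypothesis), and then lift these scalar inequalities by functional calculus together with the affinity of the secant under $\nabla_v$. The only cosmetic difference is that you justify the two secant inequalities by the concavity of $\ell-f$ and spell out the concave reverse case, which the paper leaves implicit.
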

\begin{proof}
We use an idea from \cite[Theorem 1]{3}. For simplicity, let  $a=n\nabla_v m$ and $b=N\nabla_v M.$

Now since $[a,b]\cap [m,M]=\varnothing$ and $[a,b]\cap [n,N]=\varnothing$, we will consider the secant of $f$ on the interval $[a,b]$. So, let
$$y(t)=\frac{b-t}{b-a}f(a)+\frac{t-a}{b-a}f(b).$$
Since $f$ is convex, it satisfies
\begin{equation}\label{needed_1}
f(t)\leq y(t),\quad a\leq t\leq b.
\end{equation}
Since $n\leq A\leq N$ and $m\leq B\leq M$, it follows that $n\nabla_v m\leq A\nabla_v B\leq N\nabla_v M$, and hence 
\begin{equation}\label{needed_2}
f(A\nabla_v B)\leq y(A\nabla_v B)
\end{equation}
by applying a functional calculus argument to \eqref{needed_1}.

 Further, by the empty intersection assumption, we have  
$$f(t)\geq y(t),\quad t\in [n,N]\cup [m,M],$$ because $f$ is convex on $(0,\infty)$.

Since $n\leq A\leq N$ and $m\leq B\leq M$, functional calculus implies
$$f(A)\geq y(A)\quad{\text{ and }}\quad f(B)\geq y(B).$$
Noting that $y(A)\nabla_v y(B)=y(A\nabla_v B),$ the above inequalities together with \eqref{needed_2} imply

$$f(A)\nabla_v f(B)\geq y(A)\nabla_v y(B)= y(A\nabla_v B)\geq f(A\nabla_vB),$$ which completes the proof.
\end{proof}

Related to Theorem \ref{6}, we present the following version for $v\geq 1.$ Proceeding similarly, one can obtain similar results for $v\leq 0.$
\begin{theorem}\label{them_neg}
Let $A,B\in \mathcal{B}\left( \mathcal{H} \right)$ be two positive operators such that $n\le A\le N$ and $m\le B\le M$ 
for some positive real numbers $n<N$ and $m<M$. Further, let $f:(0,\infty)\to\mathbb [0,\infty)$ be a convex function. If, for some $v\geq 1$,
$$\left[ N{{\nabla }_{v}}m,n{{\nabla }_{v}}M \right]\cap \left[ m,M \right]=\varnothing\quad{\text{ and }}\quad[ n,N]\subset \left[ N{{\nabla }_{v}}m,n{{\nabla }_{v}}M \right],$$ 
then 
$$
f\left( A{{\nabla }_{v}}B \right)\le f\left( A \right){{\nabla }_{v}}f\left( B \right).
$$
The reverse inequalities hold when $f$ is concave.
\end{theorem}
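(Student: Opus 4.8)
The plan is to mimic the proof of Theorem~\ref{6}, using the secant line of $f$ over the interval determined by $v\geq 1$, but now paying close attention to how the inequalities for the secant line reverse on the two ranges. First I would set $a=N\nabla_v m=(1-v)N+vm$ and $b=n\nabla_v M=(1-v)n+vM$. Note that for $v\geq 1$ the weight $(1-v)$ is non-positive, so the map $A\mapsto A\nabla_v B$ is no longer monotone in the same way: the role of $n,N,m,M$ in bounding $A\nabla_v B$ switches. The key preliminary observation is that $n\leq A\leq N$ and $m\leq B\leq M$ together with $(1-v)\leq 0$ and $v\geq 0$ give $a=N\nabla_v m\leq A\nabla_v B\leq n\nabla_v M=b$; this is exactly why the endpoints are written as $N\nabla_v m$ and $n\nabla_v M$ rather than $n\nabla_v m$ and $N\nabla_v M$. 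I would verify this operator bound carefully, since the reversed coefficient sign is the whole reason the interval endpoints differ from Theorem~\ref{6}.

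Next I would introduce the secant line
$$y(t)=\frac{b-t}{b-a}f(a)+\frac{t-a}{b-a}f(b)$$
over $[a,b]$ and record the two consequences of convexity that the hypotheses are engineered to produce. By convexity, $f(t)\leq y(t)$ for $t\in[a,b]$, and by functional calculus applied to the operator bound $a\leq A\nabla_v B\leq b$ this yields $f(A\nabla_v B)\leq y(A\nabla_v B)$. For the reverse direction I would use the hypothesis $[a,b]\cap[m,M]=\varnothing$, which places $[m,M]$ entirely outside the secant interval and hence forces $f(t)\geq y(t)$ for $t\in[m,M]$ by convexity; functional calculus then gives $f(B)\geq y(B)$. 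The subtle point, and where this theorem genuinely differs from Theorem~\ref{6}, is the treatment of $A$: here the hypothesis is $[n,N]\subset[a,b]$ rather than $[n,N]\cap[a,b]=\varnothing$, so on $[n,N]$ we have $f(t)\leq y(t)$, giving $f(A)\leq y(A)$ rather than $f(A)\geq y(A)$.

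The main obstacle is therefore reconciling these oppositely-directed inequalities $f(A)\leq y(A)$ and $f(B)\geq y(B)$ into the single desired conclusion, which is precisely where the sign of the weight $(1-v)$ re-enters. Since $y$ is affine we still have the exact identity $y(A)\nabla_v y(B)=y(A\nabla_v B)$. The operation $X\nabla_v Y=(1-v)X+vY$ with $v\geq 1$ is \emph{order-reversing} in its first argument (because $1-v\leq 0$) and order-preserving in its second. Thus from $y(A)\geq f(A)$ the order-reversal in the first slot yields $y(A)\nabla_v y(B)\leq f(A)\nabla_v y(B)$, and then from $y(B)\leq f(B)$ the order-preservation in the second slot yields $f(A)\nabla_v y(B)\leq f(A)\nabla_v f(B)$. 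Chaining these together with the affine identity gives
$$f(A\nabla_v B)\leq y(A\nabla_v B)=y(A)\nabla_v y(B)\leq f(A)\nabla_v f(B),$$
which is the claimed inequality. The concave case follows by applying the convex result to $-f$, reversing every inequality throughout. I would be careful to spell out explicitly the monotonicity of $\nabla_v$ in each slot under the assumption $v\geq 1$, since that sign bookkeeping is the only genuinely new ingredient relative to Theorem~\ref{6}.
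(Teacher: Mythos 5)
Your proposal is correct and follows essentially the same route as the paper: bound $A\nabla_v B$ between $c=N\nabla_v m$ and $d=n\nabla_v M$, compare $f$ with the secant line $y$ on $[c,d]$ and on $[m,M]$, $[n,N]$, and combine via functional calculus and the affinity of $y$. The only difference is one of completeness — the paper's proof is a two-line sketch deferring to Theorem~\ref{6}, whereas you spell out the genuinely new ingredient, namely that $f(A)\le y(A)$ (from $[n,N]\subset[c,d]$) combines with $f(B)\ge y(B)$ in the right direction precisely because the weight $1-v$ is nonpositive.
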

\begin{proof}
Notice first that when $n\leq A\leq N, m\leq B\leq M$ and $v\geq 1$, then
$$c:=N\nabla_v m\leq  A\nabla_v B\leq n\nabla_v M:=d.$$ Let $$y(t)=\frac{d-t}{d-c}f(c)+\frac{t-c}{d-c}f(d).$$ Then noting the assumptions and applying a functional calculus argument, similar to Theorem \ref{6}, imply the desired inequality.
\end{proof}

\begin{remark}
In Theorem \ref{6}, we assumed that $f$ is convex on $(0,\infty).$ In fact, it is enough to assume convexity on an interval $J$ containing the three intervals $[n,N], [m,M]$ and $[n\nabla_v m, N\nabla_v M].$

Also, in Theorem \ref{6} we assumed that $A$ and $B$ are positive operators. It is clear that self adjointeness is enough.
\end{remark}
As an immediate consequence of Theorem \ref{6}, we have the following subadditivity result for convex functions.
\begin{corollary}\label{7}
Under the same assumptions of theorem \ref{6}, with $v=\frac{1}{2}$, the following subadditivity inequality holds for  the convex function $f$.
	\[2f\left( A+B \right)\le f\left( 2A \right)+f\left( 2B \right)\Leftrightarrow f\left(\frac{A+B}{2}\right)\leq \frac{f(A)+f(B)}{2}.\]
 If in addition $f\left( 2t \right)\le 2f\left( t \right)$, then
\[f\left( A+B \right)\le f\left( A \right)+f\left( B \right).\]
\end{corollary}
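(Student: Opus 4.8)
The plan is to derive Corollary \ref{7} directly from Theorem \ref{6} by specializing to $v=\tfrac12$ and then exploiting a simple scaling argument. First I would observe that with $v=\tfrac12$ the conclusion \eqref{4} of Theorem \ref{6} reads $f\bigl(\tfrac{A+B}{2}\bigr)\le \tfrac{f(A)+f(B)}{2}$, which is precisely the right-hand side of the claimed equivalence. To obtain the left-hand form, I would replace $A$ by $2A$ and $B$ by $2B$: since $2A$ and $2B$ are still positive operators, applying \eqref{4} to them gives $f\bigl(\tfrac{2A+2B}{2}\bigr)\le \tfrac{f(2A)+f(2B)}{2}$, i.e. $f(A+B)\le \tfrac{f(2A)+f(2B)}{2}$, which rearranges to $2f(A+B)\le f(2A)+f(2B)$. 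Conversely, starting from $2f(A+B)\le f(2A)+f(2B)$ and substituting $A\to A/2$, $B\to B/2$ recovers \eqref{4}, establishing the two-sided implication.

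Next I would handle the final conclusion under the extra hypothesis $f(2t)\le 2f(t)$. The idea is that this scalar inequality, promoted to operators via functional calculus, controls the terms $f(2A)$ and $f(2B)$ appearing on the right of the first inequality. Applying the functional-calculus version $f(2A)\le 2f(A)$ and $f(2B)\le 2f(B)$, I would combine these with $2f(A+B)\le f(2A)+f(2B)$ to get $2f(A+B)\le 2f(A)+2f(B)$, and dividing by $2$ yields the desired subadditivity $f(A+B)\le f(A)+f(B)$.

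The one point requiring care is the legitimacy of passing from the scalar inequality $f(2t)\le 2f(t)$ to the operator inequality $f(2A)\le 2f(A)$. For a single positive operator $A$ with spectrum in $[n,N]$, the inequality $f(2t)\le 2f(t)$ holding for all $t$ in the relevant spectral range lets functional calculus produce $f(2A)\le 2f(A)$, since both sides are obtained by applying monotone spectral mappings to $A$ and the scalar domination holds pointwise on the spectrum. I expect this to be the main (and only mild) obstacle: one must check that the interval on which $f(2t)\le 2f(t)$ is assumed covers the spectra of both $A$ and $B$, which is guaranteed by the standing hypotheses $n\le A\le N$ and $m\le B\le M$ of Theorem \ref{6}. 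No subtlety beyond pointwise spectral domination arises, because we are comparing two functions of the \emph{same} operator rather than invoking operator convexity.
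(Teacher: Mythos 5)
Your proposal is correct and follows essentially the same route as the paper: the paper's proof is exactly the substitution $2f(A+B)=2f\bigl(\tfrac{2A+2B}{2}\bigr)\le f(2A)+f(2B)$, i.e.\ applying inequality \eqref{4} to $2A$ and $2B$ (legitimate since the spectral bounds and the empty-intersection condition are scale-invariant), and then the final claim follows from the pointwise domination $f(2t)\le 2f(t)$ promoted to $f(2A)\le 2f(A)$ by functional calculus, just as you argue. Your write-up is in fact slightly more complete, since you spell out the converse substitution $A\to A/2$, $B\to B/2$ for the equivalence and the functional-calculus step, both of which the paper leaves implicit.
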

\begin{proof}
	We have
\[2f\left( A+B \right)=2f\left( \frac{2A+2B}{2} \right)\le f\left( 2A \right)+f\left( 2B \right).\]
This completes the proof.
\end{proof}
Notice that  the assumption $f\left( 2t \right)\le 2f\left( t \right)$ can be dropped for the non-negative decreasing functions. This follows from the fact that each non-negative decreasing function $f$ satisfies $f\left( 2t \right)\le 2f\left( t \right)$.
\begin{remark}\label{8}
Although our assumptions in Corollary \ref{7} are different from \cite[Corollary 2.5]{1}, we present an operator inequality which is much stronger than the eigenvalue inequality in \cite{1}.
\end{remark}
The following corollary presents a stronger version of \cite[Corollary 2.5]{2}. In \cite{2}, unitarily invariant versions for complex matrices have been obtained. In the following result, we present an operator version under some conditions on the spectra. In this result, we use the notation $\tau(X)$ to denote the smallest closed interval containing the spectrum of the operator $X\in\mathcal{B}(\mathcal{H})$.
\begin{corollary}\label{5}
Let $A,B\in \mathcal{B}\left( \mathcal{H} \right)$ be two strictly positive operators. Then

\[{{2}^{1-r}}{{\left( A+B \right)}^{r}}\le {{A}^{r}}+{{B}^{r}}\quad\text{ for }r>1\text{ and }r<0,\]
and
\[{{A}^{r}}+{{B}^{r}}\le {{2}^{1-r}}{{\left( A+B \right)}^{r}}\quad\text{ for }r\in \left[ 0,1 \right]\]
whenever $\tau \left( A \right)\cap \tau \left( \frac{A+B}{2}\; \right),\tau \left( B \right)\cap \tau \left( \frac{A+B}{2}\; \right)=\varnothing $.
\end{corollary}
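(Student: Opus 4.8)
The plan is to specialize inequality \eqref{4} of Theorem \ref{6} to the power function $f(t)=t^{r}$ on $(0,\infty)$ and then rearrange. Since $f''(t)=r(r-1)t^{r-2}$, the function $t^{r}$ is convex exactly when $r>1$ or $r<0$ and concave exactly when $r\in[0,1]$, while strict positivity of $A$ and $B$ keeps all the relevant spectra inside $(0,\infty)$, where $f$ is defined. Thus the two displayed inequalities should correspond precisely to the convex and the concave branches of Theorem \ref{6}.

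Granting \eqref{4} for $f(t)=t^{r}$, the algebra is routine. In the convex range, \eqref{4} reads $\left(\frac{A+B}{2}\right)^{r}\le\frac{A^{r}+B^{r}}{2}$; multiplying by $2$ and using $\left(\frac{A+B}{2}\right)^{r}=2^{-r}(A+B)^{r}$ gives $2^{1-r}(A+B)^{r}\le A^{r}+B^{r}$. In the concave range $r\in[0,1]$, the reverse inequality of Theorem \ref{6} gives $\left(\frac{A+B}{2}\right)^{r}\ge\frac{A^{r}+B^{r}}{2}$, which rearranges to $A^{r}+B^{r}\le 2^{1-r}(A+B)^{r}$.

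The substantive point is to confirm that the hypothesis stated here is exactly what \eqref{4} needs. I would re-run the secant construction from the proof of Theorem \ref{6}, but drawing the chord $y$ over the \emph{sharp} interval $[a,b]:=\tau\!\left(\frac{A+B}{2}\right)$ rather than over $\left[n\nabla m,\,N\nabla M\right]$. By the definition of $\tau$ we have $a\le\frac{A+B}{2}\le b$, so convexity together with functional calculus gives $f\!\left(\frac{A+B}{2}\right)\le y\!\left(\frac{A+B}{2}\right)$. A convex $f$ lies on or above the extension of its chord outside $[a,b]$, so once $\tau(A)$ and $\tau(B)$ are each disjoint from $[a,b]=\tau\!\left(\frac{A+B}{2}\right)$ — which is precisely the stated condition — functional calculus yields $f(A)\ge y(A)$ and $f(B)\ge y(B)$. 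Since $y$ is affine, $y(A)\nabla y(B)=y\!\left(\frac{A+B}{2}\right)$, and combining the three relations exactly as in Theorem \ref{6} produces \eqref{4}.

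I expect the main obstacle to be exactly this sharpening of the interval. Theorem \ref{6} is phrased with the generally larger interval $\left[n\nabla m,N\nabla M\right]$, so quoting it verbatim would demand disjointness from that larger set, a strictly stronger requirement than the corollary's. The resolution is to observe that the chord argument uses only that $\frac{A+B}{2}$ has its spectrum inside the chosen interval while $A$ and $B$ have their spectra outside it; taking that interval to be the exact spectral span $\tau\!\left(\frac{A+B}{2}\right)$ is what makes the sharp hypothesis suffice, with the concave case following from the identical argument with all inequalities reversed.
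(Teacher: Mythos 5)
Your proposal is correct, and at its core it follows the route the paper itself intends: Corollary \ref{5} appears there with no separate proof, being left as the specialization $f(t)=t^{r}$ of Theorem \ref{6} (with $v=1/2$, rearranged as in Corollary \ref{7}), and your convex/concave case split together with the rearrangement $\left(\frac{A+B}{2}\right)^{r}=2^{-r}(A+B)^{r}$ is exactly that. What you add, correctly, is the repair of a point the paper glosses over: the corollary's hypothesis is strictly weaker than what Theorem \ref{6} demands, so the specialization is not licit verbatim. Theorem \ref{6} needs $\tau(A)$ and $\tau(B)$ to avoid the whole interval $\left[n\nabla m,N\nabla M\right]$, which contains $\tau\left(\frac{A+B}{2}\right)$ but can be strictly larger: for $A=\mathrm{diag}(1,3)$ and $B=\mathrm{diag}(7,5)$ one has $\tau\left(\frac{A+B}{2}\right)=\{4\}$, disjoint from $\tau(A)=[1,3]$ and $\tau(B)=[5,7]$, yet the smallest admissible interval $\left[n\nabla m,N\nabla M\right]=[3,5]$ meets $\tau(A)$, so no choice of bounds lets Theorem \ref{6} be quoted while only the corollary's hypothesis holds. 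Your re-running of the secant construction over the sharp interval $[a,b]=\tau\left(\frac{A+B}{2}\right)$ --- $f\le y$ on $[a,b]$ by convexity, $f\ge y$ on $\tau(A)\cup\tau(B)$ because a convex function dominates the extension of its chord outside the chord's interval, and $y(A)\nabla y(B)=y\left(\frac{A+B}{2}\right)$ by affinity of $y$ --- is precisely the needed fix, and it is the paper's own method applied to a better interval. The only detail you leave implicit is the degenerate case $a=b$, where the chord formula divides by zero; there $\frac{A+B}{2}=aI$, and replacing the chord by a supporting line of $f$ at $a$ makes the same three relations hold trivially (the disjointness hypothesis is not even needed). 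With that remark your argument is complete, and it actually justifies the corollary as stated, which a verbatim citation of Theorem \ref{6} would not.
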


\begin{remark}
The inequalities in Corollary \ref{5} are also stronger than the one given in \cite[Remark 2.11]{1} because of the same reasoning mentioned in Remark \ref{8}.
\end{remark}

The following example shows how Corollary \ref{5} works.
\begin{example}
Taking $A=\left( \begin{matrix}
3 & 1  \\
1 & 5  \\
\end{matrix} \right)$, $B=\left( \begin{matrix}
10 & -1  \\
-1 & 9  \\
\end{matrix} \right)$. 
It is not hard to check that $A$ and $B$ satisfy in the conditions of Corollary \ref{5}.
\begin{itemize}
	\item[(i)] For $r=6$,
	\[{{A}^{r}}+{{B}^{r}}-{{2}^{1-r}}{{\left( A+B \right)}^{r}}\approx \left( \begin{matrix}
	985931.21 & -476992  \\
	-476992 & 433279  \\
	\end{matrix} \right)\gneqq0.\]
	
	\item[(ii)] For $r=-2$,
	\[{{A}^{r}}+{{B}^{r}}-{{2}^{1-r}}{{\left( A+B \right)}^{r}}\approx \left( \begin{matrix}
	0.0956 & -0.0384  \\
	-0.0384 & 0.0229  \\
	\end{matrix} \right)\gneqq0.\]
		
	\item[(iii)] For $r={1}/{3}\;$,
	\[{{2}^{1-r}}{{\left( A+B \right)}^{r}}-{{A}^{r}}-{{B}^{r}}\approx\left( \begin{matrix}
	0.1519 & -0.061  \\
	-0.061 & 0.0486  \\
	\end{matrix} \right)\gneqq0.\]
\end{itemize}
\end{example}

Although Corollary \ref{5} does not present equivalent conditions; and only necessary conditions are proposed, we present the following example which presents an example where the conditions and the conclusions are not satisfied. 
\begin{example}
Let 
$$A=\left(\begin{array}{cc}1&1\\1&1\end{array}\right),\quad B=\left(\begin{array}{cc}3&1\\1&1\end{array}\right).$$
This example was given in \cite[Example V.1.4., P. 114]{B2} to show that the function $f(t)=t^3$ is not operator convex. That is, it is shown there that the inequality
\[{{2}^{1-r}}{{\left( A+B \right)}^{r}}\le {{A}^{r}}+{{B}^{r}}\] is not true for $r=3$ and the above matrices.
Here we explain why this inequality of Corollary \ref{5} does not apply. The reason lies in the computations of the spectra:
$$\tau(A)=[0,2],\;\tau(B)=[2-\sqrt{2},2+\sqrt{2}]\;{\text{and}}\;\tau\left(\frac{A+B}{2}\right)=\left[\frac{3-\sqrt{5}}{2},\frac{3+\sqrt{5}}{2}\right].$$
Clearly, $$\tau(A)\cap \tau\left(\frac{A+B}{2}\right)\neq\varnothing \;{\text{and}}\;\tau(B)\cap \tau\left(\frac{A+B}{2}\right)\neq\varnothing.$$
That is, the conditions of Corollary \ref{5} are not satisfied . It is also readily seen that the inequality ${{2}^{1-r}}{{\left( A+B \right)}^{r}}\le {{A}^{r}}+{{B}^{r}}$ is not valid, for $r=3.$
\end{example}

Our next result is a Hermite-Hadamard type inequality for operators satisfying certain conditions on their spectra.

\begin{corollary}
Let $A,B\in \mathcal{B}\left( \mathcal{H} \right)$ be two positive operators such that $n\le A\le N$ and $m\le B\le M$ 
for some positive real numbers $n<N$ and $m<M$. Further, let $f:(0,\infty)\to\mathbb [0,\infty)$ be a convex function. If, for all $v\in (0,1)$,
$$\left[ n{{\nabla }_{v}}m,N{{\nabla }_{v}}M \right]\cap \left[ n,N \right],\left[ n{{\nabla }_{v}}m,N{{\nabla }_{v}}M \right]\cap \left[ m,M \right]=\varnothing,$$ 
then 
\begin{equation}\label{HH_main_ineq}
f\left( \frac{A+B}{2} \right)\le \int\limits_{0}^{1}{f\left( \left( 1-v \right)A+vB \right)dv}\le \frac{f\left( A \right)+f\left( B \right)}{2}	.
\end{equation}
\end{corollary}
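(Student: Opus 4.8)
The plan is to obtain both halves of \eqref{HH_main_ineq} by integrating the conclusion of Theorem \ref{6} in the weight $v$, the essential new feature being that the empty intersection hypothesis is now postulated for \emph{every} $v\in(0,1)$, so that \eqref{3} is at our disposal along the whole family of means $A\nabla_v B$.

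For the right-hand inequality, I would fix $v\in(0,1)$ and read off from \eqref{3} that
$$f\left(A\nabla_v B\right)\le f(A)\nabla_v f(B)=(1-v)f(A)+vf(B).$$
As this holds in the operator order for each $v$, and both sides are continuous in $v$, I would integrate over $[0,1]$; since $x\mapsto\langle\,\cdot\,x,x\rangle$ is linear and order preserving, integration respects the operator order, and $\int_0^1\big((1-v)f(A)+vf(B)\big)\,dv=\tfrac12\big(f(A)+f(B)\big)$. This yields $\int_0^1 f(A\nabla_v B)\,dv\le\tfrac{f(A)+f(B)}{2}$.

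For the left-hand inequality I would exploit the reflection $v\mapsto 1-v$. The substitution $u=1-v$ shows $\int_0^1 f(A\nabla_v B)\,dv=\int_0^1 f(A\nabla_{1-v}B)\,dv$, whence the integral equals $\tfrac12\int_0^1\big(f(A\nabla_v B)+f(A\nabla_{1-v}B)\big)\,dv$. Since $\tfrac12\big((A\nabla_v B)+(A\nabla_{1-v}B)\big)=\tfrac{A+B}{2}$, I would apply the midpoint form \eqref{4} of Theorem \ref{6} to the operators $A\nabla_v B$ and $A\nabla_{1-v}B$ to obtain, for each $v$,
$$f\left(\frac{A+B}{2}\right)\le\frac{f(A\nabla_v B)+f(A\nabla_{1-v}B)}{2},$$
and then integrate over $[0,1]$ to conclude $f\big(\tfrac{A+B}{2}\big)\le\int_0^1 f(A\nabla_v B)\,dv$.

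The step I expect to be the main obstacle is the justification of this last midpoint inequality. To invoke \eqref{4} for the pair $A\nabla_v B,\ A\nabla_{1-v}B$ one must verify the separation hypothesis of Theorem \ref{6} with weight $\tfrac12$: since $n\nabla_v m\le A\nabla_v B\le N\nabla_v M$ and $n\nabla_{1-v}m\le A\nabla_{1-v}B\le N\nabla_{1-v}M$, the relevant midpoint interval collapses to $\big[\tfrac{n+m}{2},\tfrac{N+M}{2}\big]=[n\nabla_{1/2}m,N\nabla_{1/2}M]$ independently of $v$, and it must be shown disjoint from both $[n\nabla_v m,N\nabla_v M]$ and $[n\nabla_{1-v}m,N\nabla_{1-v}M]$. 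The standing assumption furnishes disjointness from the two extreme intervals $[n,N]$ and $[m,M]$ at once (the cases $v=0,1$), but transferring this to the intermediate weights uniformly in $v$ is the delicate matter; controlling how the interval $[n\nabla_v m,N\nabla_v M]$ travels as $v$ varies, and checking that the blanket ``for all $v\in(0,1)$'' hypothesis indeed keeps it clear of the midpoint interval, is the part of the argument I would treat most carefully.
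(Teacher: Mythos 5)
Your plan is structurally identical to the paper's own proof: the right-hand inequality is obtained by integrating \eqref{3} over $v$, and the left-hand inequality by writing $\frac{A+B}{2}$ as the midpoint of $A\nabla_v B$ and $A\nabla_{1-v}B$, invoking midpoint convexity for that pair, and then integrating --- this is exactly the chain \eqref{needed_HH_2} in the paper, down to the use of the symmetry $\int_0^1 f(A\nabla_v B)\,dv=\int_0^1 f(A\nabla_{1-v}B)\,dv$. The right half of your argument is complete (given the hypothesis and Theorem \ref{6}). The difference is that the paper simply asserts the first inequality of \eqref{needed_HH_2} with no justification, whereas you honestly flag it as the step still needing proof.

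That flagged step is a genuine gap, in your write-up and in the paper alike, and it is not a removable technicality. To apply Theorem \ref{6} (with weight $\frac{1}{2}$) to the pair $A\nabla_v B$, $A\nabla_{1-v}B$, one needs, as you computed, the midpoint interval $\left[\frac{n+m}{2},\frac{N+M}{2}\right]$ to be disjoint from $\left[ n\nabla_v m, N\nabla_v M\right]$ and from $\left[ n\nabla_{1-v} m, N\nabla_{1-v} M\right]$. But as $v\to\frac{1}{2}$ both of the latter intervals converge to $\left[\frac{n+m}{2},\frac{N+M}{2}\right]$, which is nondegenerate, so the required disjointness necessarily fails for all $v$ in a neighborhood of $\frac{1}{2}$, no matter what is assumed about $n,N,m,M$; and midpoint operator convexity for nearby noncommuting operators is precisely what can fail when $f$ is convex but not operator convex. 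Since the integration requires the pointwise midpoint inequality for (almost) every $v$, the strategy cannot be completed non-vacuously. The only reason no counterexample arises is that the corollary's blanket hypothesis is itself unsatisfiable: as $v\to 0^+$ the interval $\left[ n\nabla_v m, N\nabla_v M\right]$ converges to $[n,N]$ and hence meets $[n,N]$ for all small $v$, so no positive $n<N$, $m<M$ fulfill the assumption for all $v\in(0,1)$, and the statement is vacuously true. In short, your proposal reproduces the paper's argument, and the obstacle you identified is a real defect of that argument rather than something you overlooked how to fix.
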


\begin{proof}
As we have shown, if the assumptions of Theorem  \ref{6} are satisfied then
\begin{equation}\label{needed_HH_1}
f\left( \left( 1-v \right)A+vB \right)\le \left( 1-v \right)f\left( A \right)+vf\left( B \right),\quad\forall v\in (0,1).
\end{equation}
It follows from \eqref{needed_HH_1} that, for $v\in (0,1),$
\begin{equation}\label{needed_HH_2}
\begin{aligned}
f\left( \frac{A+B}{2} \right)&=f\left( \frac{\left( 1-v \right)A+vB+\left( 1-v \right)B+vA}{2} \right) \\ 
& \le \frac{1}{2}\left[ f\left( \left( 1-v \right)A+vB \right)+f\left( \left( 1-v \right)B+vA \right) \right] \\ 
& \le \frac{f\left( A \right)+f\left( B \right)}{2}. 
\end{aligned}
\end{equation}

Now, integrating over $v\in \left[ 0,1 \right]$ the inequalities in \eqref{needed_HH_2} and taking into account that
\[\int\limits_{0}^{1}{f\left( \left( 1-v \right)A+vB \right)dv}=\int\limits_{0}^{1}{f\left( \left( 1-v \right)B+vA \right)dv}\]
we infer 
\begin{equation}\label{needed_HH_3}
f\left( \frac{A+B}{2} \right)\le \int\limits_{0}^{1}{f\left( \left( 1-v \right)A+vB \right)dv}\le \frac{f\left( A \right)+f\left( B \right)}{2}	.
\end{equation}
Notice that \eqref{needed_HH_3} nicely extend the main result of \cite{drag_HH_1}.
\end{proof}

 Aujla showed that if $f:[0,\infty)\to [0,\infty)$ is an operator monotone decreasing function $f$, then \cite[Theorem 2.6]{2} 
 \begin{equation}\label{10}
2f\left( A+B \right)\le f\left( A \right)+f\left( B \right),
 \end{equation}
where $A$, $B$ are two positive matrices. Indeed, \eqref{10} follows by adding the following observations
\[f\left( A+B \right)\le f\left( A \right)\quad\text{ and }\quad f\left( A+B \right)\le f\left( B \right).\]
On account of Proposition \ref{oper_intro_prop}, we can improve \eqref{10} as follows.
\begin{proposition}
	Let $A,B\in \mathcal{B}\left( \mathcal{H} \right)$ be two positive operators. If $f:[0,\infty)\to [0,\infty)$ is an operator monotone decreasing function, then
\begin{equation}\label{9}
2f\left( A+B \right)\le 2f\left( A\nabla B \right)\le f\left( A \right)+f\left( B \right).
\end{equation}	
\end{proposition}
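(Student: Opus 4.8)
The plan is to establish the two inequalities in \eqref{9} independently, since each rests on a different property of $f$. Recalling that $A\nabla B=\tfrac{A+B}{2}$, the left-hand inequality $2f(A+B)\le 2f(A\nabla B)$ reduces to $f(A+B)\le f(A\nabla B)$. The key observation is the operator inequality $A+B\ge A\nabla B$: indeed $(A+B)-A\nabla B=\tfrac{A+B}{2}\ge 0$ because $A,B\ge 0$. Since $f$ is operator monotone \emph{decreasing}, applying it reverses this order and yields $f(A+B)\le f(A\nabla B)$, which is precisely the left inequality after multiplying by $2$.

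For the right-hand inequality $2f(A\nabla B)\le f(A)+f(B)$, the idea is to invoke operator convexity rather than monotonicity. By Proposition \ref{oper_intro_prop}(1), a non-negative operator monotone decreasing function is operator convex (the hypothesis $f(\infty)<\infty$ being automatic here, as remarked after that proposition). Operator convexity with weight $v=\tfrac12$ then gives $f(A\nabla B)\le f(A)\nabla f(B)=\tfrac{f(A)+f(B)}{2}$, and doubling produces the desired bound. Chaining the two halves yields \eqref{9}.

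Regarding obstacles, the argument is short and the only genuine point to verify is that the hypotheses of Proposition \ref{oper_intro_prop} are met, namely that $f$ maps into $[0,\infty)$ so that the implicit condition $f(\infty)>-\infty$ holds; this is built into the statement. The conceptual content, and the improvement over the earlier estimate \eqref{10}, lies entirely in the middle term $2f(A\nabla B)$: whereas \eqref{10} is obtained by crudely summing $f(A+B)\le f(A)$ and $f(A+B)\le f(B)$, here we insert the arithmetic mean $A\nabla B$ and replace the second step by operator convexity, so that $2f(A\nabla B)$ is interposed between $2f(A+B)$ and $f(A)+f(B)$, sharpening the original inequality.
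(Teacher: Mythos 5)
Your proposal is correct and follows essentially the same route as the paper: the left inequality from $A\nabla B\le A+B$ together with operator monotone decrease of $f$, and the right inequality from the fact that a non-negative operator monotone decreasing function is operator convex (Proposition \ref{oper_intro_prop}), applied at weight $v=\tfrac12$. The paper's proof is word-for-word this same two-step argument, so there is nothing to add.
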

\begin{proof}
It follows from the assumption on $f$ that
\[A\nabla B\le A+B\quad\text{ }\Rightarrow\quad \text{ }f\left( A+B \right)\le f\left( A\nabla B \right).\]
It is known that such operator monotone decreasing function is also operator convex (see \cite[Theorem 2.4]{Uchiyama} and \cite{ando1}), i.e.,
\[f\left( A+B \right)\le f\left( A\nabla B \right)\le f\left( A \right)\nabla f\left( B \right)\]
 and the proof is complete. 
\end{proof}
\begin{remark}
Notice that \eqref{9} is also stronger than \cite[Proposition 3.14]{kian}, due to the fact that $f\left( A\nabla B \right)\le f\left( A \right)\sharp f\left( B \right)$, when $f:(0,\infty)\to (0,\infty)$ is operator monotone decreasing.
\end{remark}
In the following, we provide a reverse inequality for the subadditivity property of operator monotone decreasing functions. To reach this end, we need the following lemma which can be proved using the well-known Mond--Pe\v cari\'c method. A comprehensive survey on this topic can be found in  \cite[Chapter 2]{pe}.
\begin{lemma}\label{11}
Let ${{A}_{1}},\ldots ,{{A}_{n}}\in \mathcal{B}\left( \mathcal{H} \right)$ be  positive operators with spectra contained in $\left[ m,M \right]$ and ${{w}_{1}},\ldots ,{{w}_{n}}$ be positive scalars with $\sum\nolimits_{i=1}^{n}{{{w}_{i}}}=1$. If $f$ is a positive operator convex function on $\left[ m,M \right]$, then
\[\sum\limits_{i=1}^{n}{{{w}_{i}}f\left( {{A}_{i}} \right)}\le K\left( m,M,f \right)f\left( \sum\limits_{i=1}^{n}{{{w}_{i}}{{A}_{i}}} \right)\]
where 
\[K\left( m,M,f \right)=\max \left\{ \frac{1}{f\left( t \right)}\left( \frac{M-t}{M-m}f\left( m \right)+\frac{t-m}{M-m}f\left( M \right) \right):\text{ }t\in \left[ m,M \right] \right\}.\]
\end{lemma}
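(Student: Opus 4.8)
The plan is to apply the Mond--Pe\v cari\'c method, which proceeds in two stages: first one bounds $\sum_i w_i f(A_i)$ above by the operator obtained from the chord (secant line) of $f$ over $[m,M]$, and then one converts this chord into a multiple of $f\big(\sum_i w_i A_i\big)$ by invoking the very scalar inequality that defines $K(m,M,f)$. Throughout, the only analytic input on $f$ is ordinary (scalar) convexity together with positivity, the latter being needed so that the ratio defining $K$ makes sense; in fact operator convexity is not required in the argument, so the stated hypothesis is comfortably sufficient.

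For the first stage, write $\ell(t)=\frac{M-t}{M-m}f(m)+\frac{t-m}{M-m}f(M)$ for the chord of $f$ joining $(m,f(m))$ and $(M,f(M))$. Since $f$ is convex, $f(t)\le \ell(t)$ for every $t\in[m,M]$; equivalently $\ell(t)-f(t)\ge 0$ there. As each $A_i$ is self-adjoint with spectrum contained in $[m,M]$, functional calculus applied to this nonnegative scalar function yields $f(A_i)\le \ell(A_i)$ for each $i$. Multiplying by $w_i>0$, summing, and using that $\ell$ is affine so that $\sum_i w_i \ell(A_i)=\ell\big(\sum_i w_i A_i\big)$, I obtain
$$\sum_{i=1}^n w_i f(A_i)\le \ell\Big(\sum_{i=1}^n w_i A_i\Big).$$

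For the second stage, set $S=\sum_{i=1}^n w_i A_i$. Because $S$ is a convex combination of operators whose spectra lie in $[m,M]$, we have $m\le S\le M$, so $\tau(S)\subseteq[m,M]$. By the definition $K(m,M,f)=\max_{t\in[m,M]}\ell(t)/f(t)$ and the positivity of $f$, the scalar inequality $\ell(t)\le K(m,M,f)\,f(t)$ holds for every $t\in[m,M]$; i.e. the function $K(m,M,f)f(t)-\ell(t)$ is nonnegative on $[m,M]$. Applying functional calculus to $S$ once more gives $\ell(S)\le K(m,M,f)\,f(S)$. Chaining this with the first-stage estimate produces
$$\sum_{i=1}^n w_i f(A_i)\le \ell(S)\le K(m,M,f)\,f\Big(\sum_{i=1}^n w_i A_i\Big),$$
which is precisely the asserted inequality.

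The argument is essentially routine once the two functional calculus steps are set up correctly; the only points requiring care are verifying $\tau(S)\subseteq[m,M]$ (so that both functional calculus applications are legitimate) and ensuring $f>0$ on $[m,M]$ (so that $K(m,M,f)$ is well defined and finite, the maximum being attained by continuity on the compact interval). I expect no genuine obstacle beyond these bookkeeping checks.
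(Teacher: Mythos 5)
Your proof is correct and is precisely the Mond--Pe\v cari\'c chord argument that the paper itself invokes for this lemma (the paper does not write the proof out, citing instead \cite[Chapter 2]{pe}): bound $f$ above by the secant $\ell$ on $[m,M]$ via functional calculus on each $A_i$, use affinity of $\ell$ to commute it with the weighted sum, then convert $\ell(S)$ into $K(m,M,f)f(S)$ via the defining scalar inequality for $K$. Your side remark is also accurate: only scalar convexity and strict positivity of $f$ on $[m,M]$ are used, since both functional calculus steps rest on pointwise inequalities over $[m,M]$, so the hypothesis of operator convexity is stronger than needed.
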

The next result is stated in terms of operator monotone decreasing function. Notice that this condition maybe replaced by operator convexity with the additional condition that $f(\infty)<\infty$, as we have seen in Proposition \ref{oper_intro_prop}.
\begin{theorem}\label{12}
	Let $A,B\in \mathcal{B}\left( \mathcal{H} \right)$ be two positive operators such that $n\le A,B\le N$. If $f:(0,\infty)\to (0,\infty)$ is an operator monotone decreasing function and  $0<m\leq 2n\leq 2N\leq M$, then
	\begin{equation}\label{13}
f\left( A \right)+f\left( B \right)\le 4K\left( m,M,f \right)f\left( A+B \right)	
	\end{equation}
where $K\left( m,M,f \right)$ is defined as in Lemma \ref{11}.
\end{theorem}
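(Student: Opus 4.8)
The plan is to reduce the claim to a single application of the reverse Jensen inequality in Lemma \ref{11}, after rescaling the operators so that their spectra land inside $[m,M]$, and to absorb the rescaling through an elementary inequality of the form $f(t)\le 2f(2t)$.

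First I would record that, by Proposition \ref{oper_intro_prop}(1), the operator monotone decreasing function $f$ is automatically positive and operator convex on $(0,\infty)$, hence in particular on $[m,M]$; this is exactly what is needed to invoke Lemma \ref{11}. Next, since $n\le A,B\le N$ and $m\le 2n\le 2N\le M$, the rescaled operators $2A$ and $2B$ have spectra contained in $[2n,2N]\subseteq[m,M]$. Applying Lemma \ref{11} with the two operators $2A,2B$ and equal weights $w_1=w_2=\tfrac12$ then yields
$$\frac{f(2A)+f(2B)}{2}\le K(m,M,f)\,f\!\left(\frac{2A+2B}{2}\right)=K(m,M,f)\,f(A+B),$$
that is, $f(2A)+f(2B)\le 2K(m,M,f)f(A+B)$.

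It remains to pass from $f(2A),f(2B)$ back to $f(A),f(B)$, and the key step is the operator inequality $f(A)\le 2f(2A)$ (and likewise for $B$). I would establish the underlying scalar inequality $f(t)\le 2f(2t)$ for $t>0$ and then upgrade it by functional calculus, using that the spectrum of $A$ lies in $[n,N]\subseteq(0,\infty)$. For the scalar inequality I would invoke the standard integral representation of a positive operator monotone decreasing function, $f(t)=a+\int (t+s)^{-1}\,d\mu(s)$ with $a\ge 0$ and $\mu\ge 0$ (equivalently, the fact that $t\mapsto tf(t)$ is nondecreasing); term by term one checks $2f(2t)-f(t)=a+\int\big(\tfrac{1}{t+s/2}-\tfrac{1}{t+s}\big)\,d\mu(s)\ge 0$. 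Combining this with the previous display gives
$$f(A)+f(B)\le 2\big(f(2A)+f(2B)\big)\le 4K(m,M,f)\,f(A+B),$$
as required.

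The main obstacle is the auxiliary inequality $f(t)\le 2f(2t)$. It is genuinely a consequence of operator monotonicity and not of mere scalar convexity: the function $t^{-2}$ is convex, decreasing and positive yet violates it, so one cannot avoid using the structural (integral) description of operator monotone decreasing functions. Everything else is routine functional-calculus bookkeeping once the spectra are checked to sit inside $[m,M]$.
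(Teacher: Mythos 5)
Your proposal is correct and follows essentially the same route as the paper: apply Lemma \ref{11} to $2A,2B$ with weights $w_1=w_2=\tfrac12$ (legitimate since $2n\le 2A,2B\le 2N$ places the spectra in $[m,M]$), then absorb the rescaling via the scalar inequality $f(t)\le 2f(2t)$ upgraded by functional calculus. The only difference is cosmetic: the paper quotes this last inequality from \cite[Lemma 2.2]{gms} in the form $f(\alpha t)\ge \frac{1}{\alpha}f(t)$ for $\alpha\ge 1$, whereas you prove it directly from the integral representation of positive operator monotone decreasing functions, which makes your write-up self-contained (and your remark that plain convexity, e.g.\ $t^{-2}$, would not suffice is a correct and worthwhile observation).
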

\begin{proof}
By Proposition \ref{oper_intro_prop}, $f$ is operator convex and  Lemma \ref{11} applies. By taking $n=2$, ${{w}_{1}}={{w}_{2}}={1}/{2}\;$, ${{A}_{1}}=2A$, and ${{A}_{2}}=2B$ in  Lemma \ref{11}, we get
\begin{equation*}
\frac{f\left( 2A \right)+f\left( 2B \right)}{2}\le K\left( m,M,f \right)f\left( A+B \right).
\end{equation*}
Since $f:(0,\infty)\to (0,\infty)$ is  operator monotone decreasing, we have $f\left( \alpha t \right)\ge \frac{1}{\alpha }f\left( t \right)$ for each $\alpha \ge 1$ by \cite[Lemma 2.2]{gms}. This implies the desired result. 
\end{proof}

A better estimate than that in Theorem \ref{12} may be obtained  for operator concave functions  as follows. To this end, an argument similar to that in Lemma \ref{11}  implies 
\[f\left( \sum\limits_{i=1}^{n}{{{w}_{i}}{{A}_{i}}} \right)\le \frac{1}{k\left( m,M,f \right)}\sum\limits_{i=1}^{n}{{{w}_{i}}f\left( {{A}_{i}} \right)}\]
where 
\begin{equation}\label{19}
k\left( m,M,f \right)=\min \left\{ \frac{1}{f\left( t \right)}\left( \frac{M-t}{M-m}f\left( m \right)+\frac{t-m}{M-m}f\left( M \right) \right):\text{ }t\in \left[ m,M \right] \right\},
\end{equation}
for the positive operator concave function $f$. Now we are ready to present a subadditive property for operator concave functions.\\
First, we notice that any non-negative concave function $f$ on $(0,\infty)$ satisfies the property $f(\alpha t)\leq \alpha f(t)$ for $\alpha\geq 1, t>0$. This can be easily seen by considering the derivative of the function $g(t)=f(\alpha t)-\alpha f(t).$ Of course, if $f$ is not differentiable, it is still can be approximated by smooth concave functions and the differential argument holds.
\begin{theorem}\label{sub_concave}
	Let $A,B\in \mathcal{B}\left( \mathcal{H} \right)$ be two positive operators such that $n\le A,B\le N$. 	
	If $f:(0,\infty)\to (0,\infty)$ is an operator  concave function and $0<m\leq 2n\leq 2N\leq M$, then
	\begin{equation}\label{130}
k\left( m,M,f \right)f\left( A+B \right)\le f\left( A \right)+f\left( B \right)
	\end{equation}
where $k\left( m,M,f \right)$ is as in \eqref{19}.
\end{theorem}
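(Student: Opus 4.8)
The plan is to mirror the proof of Theorem \ref{12}, replacing the convex estimate of Lemma \ref{11} by its operator concave counterpart stated just above. First I would invoke the reverse inequality recorded before the theorem, namely that for a positive operator concave $f$ on $[m,M]$ and positive weights summing to one,
\[
f\left(\sum_{i=1}^n w_i A_i\right)\le \frac{1}{k(m,M,f)}\sum_{i=1}^n w_i f(A_i),
\]
applied with $n=2$, $w_1=w_2=\tfrac12$, $A_1=2A$ and $A_2=2B$. Since $n\le A,B\le N$ gives $2n\le 2A,2B\le 2N$, and the hypothesis $0<m\le 2n\le 2N\le M$ guarantees that the spectra of $2A$ and $2B$ lie in $[m,M]$, the lemma legitimately applies and yields
\[
k(m,M,f)\,f(A+B)\le \frac{f(2A)+f(2B)}{2}.
\]

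Next I would use the scalar dilation property noted immediately before the theorem: any non-negative concave $f$ on $(0,\infty)$ satisfies $f(2t)\le 2f(t)$ for $t>0$. Transferring this pointwise scalar inequality to operators via functional calculus gives $f(2A)\le 2f(A)$ and $f(2B)\le 2f(B)$. Substituting these into the previous display produces
\[
k(m,M,f)\,f(A+B)\le \frac{2f(A)+2f(B)}{2}=f(A)+f(B),
\]
which is precisely the claimed inequality.

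I do not expect a genuine obstacle here, since the argument is the concave dual of Theorem \ref{12}, with the constant $K(m,M,f)$ replaced by $1/k(m,M,f)$ and the operator monotone dilation bound $f(\alpha t)\ge \alpha^{-1}f(t)$ replaced by the concave dilation bound $f(2t)\le 2f(t)$. The only points requiring care are verifying that the spectral condition $0<m\le 2n\le 2N\le M$ is exactly what places $\sigma(2A),\sigma(2B)\subseteq[m,M]$, so that the minimum defining $k(m,M,f)$ is taken over the correct interval, and justifying the transfer of $f(2t)\le 2f(t)$ to $f(2A)\le 2f(A)$. The latter is immediate from the standard fact that $g\le h$ on an interval containing the spectrum implies $g(X)\le h(X)$, applied to $g(t)=f(2t)$ and $h(t)=2f(t)$.
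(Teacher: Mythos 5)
Your proposal is correct and is essentially identical to the paper's own proof: the paper likewise applies the operator concave counterpart of Lemma \ref{11} with $w_1=w_2=\tfrac12$, $A_1=2A$, $A_2=2B$ (using $0<m\le 2n\le 2N\le M$ to place the spectra in $[m,M]$) and then uses the dilation bound $f(2t)\le 2f(t)$ to replace $f(2A)+f(2B)$ by $2\left(f(A)+f(B)\right)$. Your additional care in justifying the functional-calculus transfer of $f(2t)\le 2f(t)$ to $f(2A)\le 2f(A)$ only makes the argument more complete.
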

\begin{proof}
Proceeding like Theorem \ref{12} and noting that $f(\alpha t)\leq \alpha f(t)$ when $f$ is operator concave and $\alpha\geq 1$ , we obtain
\begin{align*}
f(A+B)&=f\left(\frac{2A+2B}{2}\right)\\
&\leq \frac{1}{k(m,M,f)}\frac{f(2A)+f(2B)}{2}\\
&\leq \frac{1}{k(m,M,f)}\left(f(A)+f(B)\right),
\end{align*}
completing the proof.
\end{proof}

\begin{remark}
The inequalities \eqref{13} and \eqref{130} can be extended in the following way:
	\[f\left( \sum\limits_{i=1}^{\ell}{{{A}_{i}}} \right)\le \frac{1}{k\left( m,M,f \right)}\sum\limits_{i=1}^{\ell}{f\left( {{A}_{i}} \right)},\]
	whenever $f$ is  operator concave on $\left[ m,M \right]$ and $n\le {{A}_{i}}\le N$, where $m\leq \ell\;n\leq \ell\;N\leq M$. In addition, 
	\[\frac{1}{{{\ell}^{2}}K\left( m,M,f \right)}\sum\limits_{i=1}^{\ell}{f\left( {{A}_{i}} \right)}\le f\left( \sum\limits_{i=1}^{\ell}{{{A}_{i}}} \right)\]
	whenever $f$ is an operator monotone decreasing on $\left[ m,M \right]$.
\end{remark}
We conclude our discussion of subadditive-type inequalities by the following versions for convex and concave functions (not necessarily operator convex or operator concave).

For this, we remind the reader of the following simple observations. If $f$ is convex (concave) on an interval $J$ containing the spectrum of a self adjoint operator $A$, then
\begin{equation}\label{conv_conc_inner}
f\left(\left<Ax,x\right>\right)\leq (\geq )\left<f(A)x,x\right>,
\end{equation}
 for any unit vector $x\in \mathcal{H}.$ In \cite[Theorem 6]{mis}, reverses of these celebrated inequalities were found as follows
\begin{equation}\label{rev_conv}
K(m,M,f)f\left(\left<Ax,x\right>\right)\geq  \left<f(A)x,x\right>,
\end{equation}
where $f$ is convex on $[m,M]$ and $m\leq A\leq M.$ On the other hand, if $f$ is concave on $[m,M]$ and $m\leq A\leq M$, we have
\begin{equation}\label{rev_conc}
f\left(\left<Ax,x\right>\right)\leq k(m,M,f)\left<f(A)x,x\right>.
\end{equation}
Utilizing \eqref{rev_conc} and \eqref{rev_conv} implies the following superadditive and subadditive versions. 
\begin{theorem}
Let $m,M$ be be positive scalars and $m\leq A,B\leq M.$ If $f:[0,M]\to [0,\infty)$ is convex with $f(0)=0$, then
$$f(A)+f(B)\leq K(m,M,f)f(A+B),$$ where $K\left( m,M,f \right)$ is defined as in Lemma \ref{11}. On the other hand, if $f$ is concave, then
$$f(A)+f(B)\geq k(m,M,f)f(A+B),$$ where $k\left( m,M,f \right)$ is defined as in \eqref{19}.
\end{theorem}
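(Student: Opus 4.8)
The plan is to prove both operator inequalities vectorwise: since for self-adjoint $X,Y$ one has $X\le Y$ exactly when $\langle Xx,x\rangle\le\langle Yx,x\rangle$ for every unit vector $x\in\mathcal H$, I would fix such an $x$ and set $a=\langle Ax,x\rangle$ and $b=\langle Bx,x\rangle$, both of which lie in $[m,M]$ because $m\le A,B\le M$. The argument is then a three-link chain: pass from the operators to the scalars $a,b$ via the reverse Jensen bounds \eqref{rev_conv}--\eqref{rev_conc}, exploit the scalar super-/sub-additivity of $f$ on those scalars, and return to operators through the ordinary Jensen inequality \eqref{conv_conc_inner}.

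For the convex case I would apply \eqref{rev_conv} to $A$ and to $B$ and add, obtaining
$$\langle f(A)x,x\rangle+\langle f(B)x,x\rangle\le K(m,M,f)\bigl(f(a)+f(b)\bigr).$$
Next I use that a convex $f$ with $f(0)=0$ is superadditive on $[0,\infty)$: from $f(ta)\le tf(a)$ for $t\in[0,1]$, writing $a=\tfrac{a}{a+b}(a+b)$ and $b=\tfrac{b}{a+b}(a+b)$ and adding yields $f(a)+f(b)\le f(a+b)$. Since $a+b=\langle(A+B)x,x\rangle$, the convex Jensen inequality \eqref{conv_conc_inner} applied to $A+B$ gives $f(a+b)\le\langle f(A+B)x,x\rangle$. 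Chaining the three estimates produces $\langle(f(A)+f(B))x,x\rangle\le K(m,M,f)\langle f(A+B)x,x\rangle$, and as $x$ is arbitrary the operator inequality follows.

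The concave case is the mirror image. The reverse bound \eqref{rev_conc}, in its lower-estimate form $\langle f(A)x,x\rangle\ge k(m,M,f)f(a)$ together with the analogue for $B$, gives $\langle(f(A)+f(B))x,x\rangle\ge k(m,M,f)\bigl(f(a)+f(b)\bigr)$. A non-negative concave function is subadditive (this is precisely \eqref{conc_sub_intro}, available since $f\ge0$ forces $f(0)\ge0$), whence $f(a)+f(b)\ge f(a+b)$; and the concave form of \eqref{conv_conc_inner} gives $f(a+b)\ge\langle f(A+B)x,x\rangle$. Combining yields $\langle(f(A)+f(B))x,x\rangle\ge k(m,M,f)\langle f(A+B)x,x\rangle$ for all unit $x$, which is the claim.

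The only genuine obstacle is a domain matter: $A+B$ has its spectrum in $[2m,2M]$ rather than in $[m,M]$, so every step touching $f(A+B)$ or $f(a+b)$ silently requires $f$ to be convex (resp.\ concave) on an interval containing $[2m,2M]$, i.e.\ on $[0,2M]$; I would record this hypothesis explicitly. Note, however, that the constants $K(m,M,f)$ and $k(m,M,f)$ remain the ones attached to $[m,M]$ as in Lemma \ref{11} and \eqref{19}, because the reverse Jensen bounds are invoked only for $A$ and $B$, whose spectra do lie in $[m,M]$. Beyond this bookkeeping, the conceptual heart is the scalar super-/sub-additivity step: it is where the hypothesis $f(0)=0$ (resp.\ $f(0)\ge0$) enters, and it is what allows a single factor $K$ (resp.\ $k$) to dominate the two-term sum.
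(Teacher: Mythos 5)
Your proof is correct and essentially identical to the paper's own argument: the same three-link chain for an arbitrary unit vector $x$, passing from $\left\langle f(A)x,x\right\rangle+\left\langle f(B)x,x\right\rangle$ to scalars via the reverse Jensen bounds \eqref{rev_conv}/\eqref{rev_conc}, then using scalar super-/sub-additivity (from $f(0)=0$, resp.\ $f(0)\geq 0$), and returning to operators via \eqref{conv_conc_inner}. Your added remark that $f$ must actually be defined and convex (resp.\ concave) on an interval containing $[2m,2M]$, rather than merely on $[0,M]$ as the statement reads, is a legitimate correction of the theorem's hypotheses that the paper's proof passes over silently.
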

\begin{proof}
We prove the first inequality for convex function. The second inequality follows similarly. Let $x\in\mathcal{H}$ be a unit vector. Then, for the convex $f$ with $f(0)=0,$
\[\begin{aligned}
 \left\langle \left( f\left( A \right)+f\left( B \right) \right)x,x \right\rangle &=\left\langle f\left( A \right)x,x \right\rangle +\left\langle f\left( B \right)x,x \right\rangle  \\ 
& \le K\left( m,M,f \right)\left( f\left( \left\langle Ax,x \right\rangle  \right)+f\left( \left\langle Bx,x \right\rangle  \right) \right)\quad   ({\text{by}}\;\eqref{rev_conv}) \\ 
& \le K\left( m,M,f \right)f\left( \left\langle \left( A+B \right)x,x \right\rangle  \right) \quad ({\text{by}}\;\eqref{conc_sub_intro}\;{\text{for\;convex\;functions}})\\ 
& \le K\left( m,M,f \right)\left\langle f\left( A+B \right)x,x \right\rangle \quad   ({\text{by}}\;\eqref{conv_conc_inner}).
\end{aligned}\]
Since this is true for an arbitrary unit vector $x$, the first inequality follows immediately. The second follows similarly.
\end{proof}

\section*{Acknowledgement} The authors would like  to express their gratitude to the anonymous referee for valuable comments and corrections.

\vskip 0.4 true cm

{\tiny (H.R. Moradi) Young Researchers and Elite Club, Mashhad Branch, Islamic Azad
	University, Mashhad, Iran. }

{\tiny \textit{E-mail address:} hrmoradi@mshdiau.ac.ir}

{\tiny \vskip 0.4 true cm }

{\tiny (Z. Heydarbeygi) Department of Mathematics, Payame Noor Universtiy (PNU), P.O. BOX, 19395--4697, Tehran, Iran.
	
	\textit{E-mail address:} zheydarbeygi@yahoo.com}
	
{\tiny \vskip 0.4 true cm }

{\tiny (M. Sababheh) Department of Basic Sciences, Princess Sumaya University for Technology, Amman 11941,
	Jordan. 
	
\textit{E-mail address:} sababheh@psut.edu.jo}
\end{document}